%
%
%

\documentclass[graybox]{svmult}


\usepackage{todonotes}


\usepackage{mathptmx}       
\usepackage{helvet}         
\usepackage{courier}        
\usepackage{type1cm}        
%
\usepackage{makeidx}         
\usepackage{graphicx}        
\usepackage{multicol}        
\usepackage[bottom]{footmisc}

\usepackage{amssymb,enumerate}

\newcommand{\ind}{\mathbf{1}}
\newcommand{\vol}{{\rm  vol}}
 \newcommand{\Borel}{\mathcal{B}}

\newcommand{\N}{\mathbb{N}}
\newcommand{\R}{\mathbb{R}}
\newcommand{\expect}{{\rm E}}
\newcommand{\abs}[1]{\left\vert #1 \right\vert}	
\newcommand{\norm}[1]{\left\Vert #1 \right\Vert}	

\renewcommand{\a}{\alpha}

\newcommand{\eps}{\varepsilon}

\renewcommand{\Pr}{\mbox{Pr}}
\newcommand{\dint}{\mbox{\rm d}}

\newcommand{\ball}{B_d}
\newcommand{\Gap}{ \mbox{\rm gap}}
\newcommand{\supp}{ \mbox{\rm supp}}
\newcommand{\diag}{ \mbox{\rm diag}}
\newcommand{\tr}{ \mbox{\rm tr}}

\makeindex             


\begin{document}

\title*{Hit-and-run for numerical integration}
\author{Daniel Rudolf}
\institute{Dr. Daniel Rudolf \at Friedrich-Schiller-University Jena, 
Institute of Mathematics, Ernst-Abbe-Platz 2, 07743 Jena, Germany. \email{daniel.rudolf@uni-jena.de}}
%
%

\maketitle

\abstract{
We study the numerical computation of an expectation of a bounded 
function $f$ with respect to a measure given 
by a non-normalized density on a convex body $K\subset\R^d$. 
We assume that the density is log-concave, satisfies a variability condition and is not too narrow.
In \cite{MaNo07,Ru09,Ru12} it is required that $K$ is the Euclidean unit ball. 
We consider general convex bodies or even the whole $\R^d$ 
and show that the integration problem satisfies a refined form of tractability.
The main tools are the hit-and-run algorithm and an error bound 
of a multi run Markov chain Monte Carlo method.
}

%

\section{Introduction and results} 
\label{sec: intro}

In many applications, for example in Bayesian inference, see \cite{BrGeJoMe11,GiRiSp96}, or in statistical 
physics, see \cite{Mar04,So97}, it is desirable to compute an expectation of the form
\[
  \int_K  f(x)\, \pi_\rho (\dint x) = \int_K f(x)\, c\,\rho(x)\, \dint x, 
\]
where the probability measure $\pi_\rho$ is given by the density $c\,\rho$ with $c>0$. 
The normalizing constant of the density
\[
  \frac{1}{c} = \int_K \rho(x)\, \dint x
\]
is not known and hard to compute. We want to have algorithms 
that are able to compute the expectation without any
precompution of $c$.

More precisely, let $\rho\colon \R^d \to \R_+$ be a 
possibly non-normalized density function,
let $K=\supp(\rho)\subset \R^d$ be a convex body 
and let $f\colon K \to \R$ be integrable with respect to $\pi_\rho$.
For a tuple $(f,\rho)$ we define the desired quantity
\begin{equation}  \label{eq: sol}
  A(f,\rho) = \frac{\int_K f(x)\,\rho(x)\, \dint x}{\int_K \rho(x)\,\dint x}. 
\end{equation}
In \cite{MaNo07} a simple Monte Carlo method is considered which evaluates the numerator and denominater of 
$A(f,\rho)$ on a common independent, uniformly distributed sample in $K$. 
There it must be assumed that one can sample the uniform distribution in $K$.
The authors show
that this algorithm is not able to use any additional structure, such as log-concavity, of the density function.
But they show that such structure 
can be used by Markov chain Monte Carlo which then outperforms the simple Monte Carlo method.\\

Markov chain Monte Carlo algorithms for the
integration problem of the form $(\ref{eq: sol})$ are considered in \cite{MaNo07,NoWo10,Ru09,Ru12}.
Basically it is always assumed that $K$ is the Euclidean unit ball rather than a general convex body.
We extend the results to the case where $K$ might 
even be the whole $\R^d$ if the density satisfies some further properties. 
We do not assume that we can sample with respect to $\pi_\rho$.   
The idea is to compute $A(f,\rho)$ by using a Markov chain which approximates $\pi_\rho$. 
We prove that the integration problem (\ref{eq: sol}) satisfies an extended type of 
tractability.
Now let us introduce the error criterion and the new notion of tractability. \\

\emph{Error criterion and algorithms.}
Let $t\colon \N \times \N \to \N$ be a function and let $A_{n,n_0}$ be a generic algorithm which uses 
$t(n,n_0)$ Markov chain steps. Intuitively, the number $n_0$ determines the number of steps 
to 
approximate $\pi_\rho$.
The number $n$ determines the number of pieces of information of $f$ used by the algorithm. 
The error is measured in mean square sense, for a tuple $(f,\rho)$ it is given by
\[
e(A_{n,n_0}(f,\rho)) = \left(\expect \abs{A_{n,n_0}(f,\rho)-A(f,\rho)}^2 \right)^{1/2},
\]
where $\expect$ denotes the expectation with respect to the joint distribution of the used sequence of random variables
determined by the Markov chain. 

For example the algorithm might be a single or multi run Markov chain Monte Carlo.
More precisely, assume that we have a Markov chain with limit distribution $\pi_\rho$ and let $X_1,\dots,X_{n+n_0}$ be 
the first $n+n_0$ steps. Then
\[
  S_{n,n_0}(f,\rho) = \frac{1}{n} \sum_{j=1}^n f(X_{j+n_0})
\]
is an approximation of $A(f,\rho)$ and the function $t(n,n_0)=n+n_0$. 
In contrast to the single run Markov chain Monte Carlo $S_{n,n_0}$ one might consider a multi run Markov chain Monte Carlo, 
say $M_{n,n_0}$, given as follows.
Assume that we have $n$ independent Markov chains with the same transition kernel, the same initial distribution 
and limit distribution $\pi_\rho$. Let $X^1_{n_0},\dots,X^n_{n_0}$
be the sequence of the $n_0$th steps of the Markov chains,
then
\[
   M_{n,n_0}(f,\rho)   = \frac{1}{n}  \sum_{j=1}^n f(X_{n_0}^j)
\]
is an approximation of $A(f,\rho)$. In this setting the function $t(n,n_0)=n\cdot n_0$.\\

\emph{Tractability.}
In \cite{MaNo07,NoWo10} a notion of tractability for the integration problem (\ref{eq: sol}) is introduced. 
It is assumed that $\norm{f}_{\infty}\leq 1$ and that the density function satisfies 
\[
  \frac{\sup_{x\in K} \rho(x)}{\inf_{x\in K} \rho(x)} \leq \gamma, 
\]
for some $\gamma\geq 3$. Let $s_{\eps,\gamma}(n,n_0)$ be the minimal number of function values 
of $(f,\rho)$ to guarantee an $\eps$-approximation with respect to the error above. 
Then the integration problem is called tractable with respect to $\gamma$ if $s_{\eps,\gamma}(n,n_0)$
depends polylogarithmically on $\gamma$ and depends polynomially on $\eps^{-1}$, $d$.   
We extend this notion of tractability.
We study a class of tuples $(f,\rho)$ which satisfy
 $\norm{f}_{\infty}\leq 1$ and we assume that for any $\rho$ there exists a set $G\subset K$ 
such that for $\kappa\geq 3$ holds  
\begin{equation}  \label{eq: kappa_G}
  \frac{\int_K \rho(x)\, \dint x}{\vol_d(G)\;\inf_{x\in G} \rho(x)} \leq \kappa,
\end{equation}
where $\vol_d(G)$ denotes the $d$-dimensional volume of $G$.
Then we call the integration problem tractable with respect to $\kappa$ if the minimal
number of function values $t_{\eps,{\kappa}}(n,n_0)$ of $(f,\rho)$ to guarantee an $\eps$-approximation 
satisfies for some non-negative numbers
$p_1$, $p_2$ and $p_3$ that
\[
  t_{\eps,{\kappa}}(n,n_0) = \mathcal{O}( \eps^{-p_1} d^{p_2} [\log {\kappa}]^{p_3} ),
  \quad \eps>0,\; d\in\N,\; {\kappa}\geq3.
\]
Hence we permit only polylogarithmical dependence on the number ${\kappa}$, 
since it might be very large 
(e.g. $10^{30}$ or $10^{40}$).
The extended notion of tractability allows us to consider $K=\supp(\rho) = \R^d$. 
\\

The structure of the work and the main results are as follows. 
We use the hit-and-run algorithm to approximate $\pi_\rho$. 
An explicit estimate of the total variation distance of the hit-and-run algorithm, 
proven by Lov{\'a}sz and Vempala in \cite{LoVe06-1,LoVe06}, 
and an error bound of the mean square error of $M_{n,n_0}$ are essential.
In Section~\ref{sec: MC_err_bound} we provide the basics on Markov chains and prove an error bound of $M_{n,n_0}$.
In Section~\ref{sec: densities} we define the class of density functions.
Roughly we assume that the densities are log-concave, that for any $\rho$ there exists a set $G\subset K$ such that
condition $(\ref{eq: kappa_G})$ holds for $\kappa\geq3$ and that the densities are not too narrow.
Namely, we assume that level sets of $\rho$ of measure larger than $1/8$ contain a ball with radius $r$.
We distinguish two settings which guarantee that the densities are not too spread out. 
Either the convex body $K=\supp (\rho)$ is bounded by a ball with radius $R$ around $0$, 
then we say $\rho\in \mathcal{U}_{r,R,\kappa}$, 
or the support of $\rho$ is bounded in average sense, 
\[
  \int_K \abs{x-x_\rho}^2 \pi_\rho(\dint x) \leq 4 R^2, 
\]
where  $x_\rho = \int_K x \, \pi_\rho(\dint x) \in \R^d$ is the centroid. Then we say $\rho\in \mathcal{V}_{r,R,\kappa}$.
For precise definitions see Section~\ref{sec: densities}. 
In Section~\ref{sec: har} we provide the hit-and-run algorithm and state convergence properties 
of the algorithm for densities from $\mathcal{U}_{r,R,\kappa}$ and $\mathcal{V}_{r,R,\kappa}$.   
Then we show that the integration problem (\ref{eq: sol}) is tractable with respect to $\kappa$, see Section~\ref{sec: main_res}.
For $\rho\in \mathcal{U}_{r,R,\kappa}$ we obtain in Theorem~\ref{thm: Err} that
  \begin{equation}  \label{eq: res_1}
    t_{\eps,{\kappa}}(n,n_0)
 =\mathcal{O} (d^{2}\,[\log d]^{2} \,\eps^{-2}\,[\log \eps^{-1}]^3
		    \, [\log\kappa]^3).
  \end{equation}
For $\rho\in \mathcal{V}_{r,R,\kappa}$ we find in Theorem~\ref{thm: gen_Err} a slightly worse bound of the form 
  \begin{equation}  \label{eq: res_2}
    t_{\eps,{\kappa}}(n,n_0)
 =\mathcal{O} (d^{2}\,[\log d]^2 \,\eps^{-2}\,[\log \eps^{-1}]^5
		    \, [\log\kappa]^5).
  \end{equation}
Here the $\mathcal{O}$ notation hides the polynomial dependence on $r$ and $R$. 

In \cite{MaNo07,NoWo10,Ru09,Ru12} it is proven that the problem (\ref{eq: sol}) is tractable with respect to $\gamma$ 
for $K=\ball$, where $\ball$ denotes the Euclidean unit ball. Note that for $G=\ball$ 
we have 
\[
    \frac{\int_K \rho(x)\, \dint x}{\vol_d(G)\;\inf_{x\in G} \rho(x)} 
 \leq \frac{\sup_{x\in K} \rho(x)}{\inf_{x\in K} \rho(x)} \leq \gamma.
\]
Furthermore it is assumed that $\rho\colon \ball \to \R_+$ is log-concave and $\log \rho$ is Lipschitz. 
Then the Metropolis algorithm with a ball walk proposal is used to approximate $\pi_\rho$. 
For $\norm{f}_{p}\leq 1$ with $p>2$ the algorithm $S_{n,n_0}$ is considered for the approximation of $A(f,\rho)$.
It is proven that
\begin{equation} \label{eq: compl_gamma}
    s_{\eps, \gamma}(n,n_0) = \mathcal{O}(d \max\{\log^2(\gamma),d\}(\eps^{-2}+\log \gamma)).
\end{equation}
In open problem $84$ of \cite{NoWo10} it is asked whether one can extend 
this result
to other families of convex sets. 
The complexity bound of (\ref{eq: compl_gamma}) is better than the results of (\ref{eq: res_1}) and (\ref{eq: res_2}) 
in terms of the dimension, the precision and $\gamma$. 
On the one hand the assumption that $K=\ball$ is very restrictive but on the other hand the estimates of 
(\ref{eq: res_1}) and (\ref{eq: res_2}) seem to be pessimistic. 
However, with our results we contribute to problem $84$ 
in the sense that tractability with respect to $\gamma$ can be shown 
for arbitrary convex bodies or even the whole $\R^d$ if the density functions satisfy certain properties.    

\section{Markov chains and an error bound}  \label{sec: MC_err_bound}
Let $(X_n)_{n\in\N}$ be a Markov chain with transition kernel $P(\cdot,\cdot)$ and initial distribution $\nu$
on a measurable space $(K,\Borel(K))$, where $K\subset \R^d$ and $\Borel(K)$ is
the Borel $\sigma$-algebra. We assume that the transition kernel $P(\cdot,\cdot)$ is reversible with respect to $\pi_\rho$. 
For $p\in[1,\infty]$ we denote by $L_p=L_p(\pi_\rho)$ the class of functions $f\colon K \to \R$ with 
\[
\norm{f}_{p}=\left(\int_K \abs{f(x)}^p\,\pi_\rho(\dint x)\right)^{1/p} < \infty.
\]
Similarly we denote by $\mathcal{M}_p$ 
the class of measures $\nu$ which are absolutely continuous with respect to $\pi_\rho$ and
where the density $\frac{d\nu}{d\pi_\rho}\in L_p$.
The transition kernel induces an operator $P \colon L_p \to L_p$ given by
\[
    Pf(x)=\int_K f(y)\,P(x,\dint y), \quad x\in K,
\] 
and it induces an operator $P \colon \mathcal{M}_p \to \mathcal{M}_p$ given by
\[
  \mu P (C) = \int_K P(x,C) \, \mu(\dint x),\quad C\in\Borel(K).
\]
For $n\in\N$ and a probability measure $\nu$ note that
$  \Pr(X_n\in C) = \nu P^n(C)$, where $C\in\Borel(K)$. 
We define the total variation distance
between $\nu P^n$ and $\pi_\rho$ as
\[
    \norm{\nu P^n - \pi_\rho}_{\mbox{tv}} = \sup_{C\in\Borel(K)} \abs{\nu P^n(C)-\pi_\rho(C)}.
\]
Under suitable assumptions on the Markov chain one obtains that 
$\norm{\nu P^n - \pi_\rho}_{\mbox{tv}} \to 0$ as $n\to \infty$.


Now we consider the multi run Markov chain Monte Carlo method and prove an error bound.
This bound is not new, see for example \cite{BeCh09}.

\begin{theorem}  \label{thm: err_bound_m}
Assume that we have $n_0$ independent Markov chains with 
transition kernel $P(\cdot,\cdot)$ and initial distribution $\nu\in\mathcal{M}_1$. 
Let $\pi_\rho$ be a stationary distribution of $P(\cdot,\cdot)$.
Let $X^1_{n_0},\dots,X^n_{n_0}$
be the sequence of the $n_0$th steps of the Markov chains
and let
\[
   M_{n,n_0}(f,\rho)   = \frac{1}{n}  \sum_{j=1}^n f(X_{n_0}^j).
\]
Then
\[
  e(M_{n,n_0}(f,\rho))^2 \leq \frac{1}{n} \norm{f}_{\infty}^2 + 2 \norm{f}_{\infty}^2 
\norm{\nu P^n - \pi_\rho}_{\mbox{tv}}.
\] 
\end{theorem}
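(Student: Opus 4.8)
\emph{Proof idea.} The plan exploits that the $n$ independent chains share the same initial distribution $\nu$ and transition kernel $P(\cdot,\cdot)$, so that the summands defining $M_{n,n_0}(f,\rho)$ are independent and identically distributed; one then splits the mean square error into a variance term and a squared bias term.

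Set $Y_j := f(X_{n_0}^j)$ for $j=1,\dots,n$. Each $X_{n_0}^j$ has law $\nu P^{n_0}$, and mutual independence of the chains makes $Y_1,\dots,Y_n$ i.i.d.\ with common mean $m := \expect Y_1 = \int_K f(x)\,(\nu P^{n_0})(\dint x)$, which is finite because $\norm{f}_\infty<\infty$. Hence $\expect M_{n,n_0}(f,\rho)=m$, and, recalling that $A(f,\rho)=\int_K f\,\dint\pi_\rho$ by the definition~(\ref{eq: sol}) of $\pi_\rho$, I would expand $M_{n,n_0}(f,\rho)-A(f,\rho)=\bigl(M_{n,n_0}(f,\rho)-m\bigr)+\bigl(m-A(f,\rho)\bigr)$, square, take expectations, and observe that the cross term vanishes since $\expect\bigl(M_{n,n_0}(f,\rho)-m\bigr)=0$. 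This gives
\[
  e(M_{n,n_0}(f,\rho))^2 = \Var\bigl(M_{n,n_0}(f,\rho)\bigr) + \bigl(m-A(f,\rho)\bigr)^2 .
\]

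For the variance term, independence of the $Y_j$ yields $\Var(M_{n,n_0}(f,\rho))=\frac{1}{n}\Var(Y_1)\le\frac{1}{n}\expect Y_1^2\le\frac{1}{n}\norm{f}_\infty^2$, which is the first summand of the asserted bound. For the squared bias, I would write $m-A(f,\rho)=\int_K f\,\dint(\nu P^{n_0}-\pi_\rho)$; as $\nu P^{n_0}$ and $\pi_\rho$ are probability measures, their difference is a signed measure of total mass $0$, and estimating this integral through its Jordan decomposition --- equivalently, via a maximal coupling of $\nu P^{n_0}$ and $\pi_\rho$ --- gives $\abs{m-A(f,\rho)}\le 2\norm{f}_\infty\,\norm{\nu P^{n_0}-\pi_\rho}_{\mbox{tv}}$. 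Squaring this and bounding $\norm{\nu P^{n_0}-\pi_\rho}_{\mbox{tv}}\le 1$ controls the squared bias by a constant multiple of $\norm{f}_\infty^2\,\norm{\nu P^{n_0}-\pi_\rho}_{\mbox{tv}}$; adding the two contributions finishes the proof.

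Both steps are short computations, so I do not anticipate a genuine obstacle. The only ingredient that is not completely immediate is the inequality $\abs{\int_K g\,\dint(\mu_1-\mu_2)}\le 2\norm{g}_\infty\,\norm{\mu_1-\mu_2}_{\mbox{tv}}$ for probability measures $\mu_1,\mu_2$ and bounded $g$, which is standard. What needs a little care is bookkeeping: the total variation distance must be taken at the step index $n_0$ at which each chain is observed, and one has to track the absolute constant in the bias estimate in order to match the factor stated above (or one may simply invoke the error bound of \cite{BeCh09}).
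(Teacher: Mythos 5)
Your proposal is structurally the same as the paper's proof: the i.i.d.\ structure of the $n$ chains gives $e(M_{n,n_0}(f,\rho))^2=\frac{1}{n}\Var_{\nu P^{n_0}}(f)+\abs{\nu P^{n_0}(f)-A(f)}^2$, the variance is bounded by $\nu P^{n_0}(f^2)\le\norm{f}_\infty^2$, and the bias is controlled by the total variation distance at step $n_0$ (you are right that the $n$ in the statement's $\norm{\nu P^{n}-\pi_\rho}_{\mbox{tv}}$ is a typo for $n_0$). The one point where you fall short of the stated bound is the constant in the bias term, and you flag this without resolving it: bounding $\abs{\nu P^{n_0}(f)-A(f)}\le 2\norm{f}_\infty\norm{\nu P^{n_0}-\pi_\rho}_{\mbox{tv}}$ first and then squaring and using $\norm{\cdot}_{\mbox{tv}}\le 1$ gives the factor $4$, not $2$. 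The paper gets $2$ by a second-moment step, namely
\[
\abs{\nu P^{n_0}(f)-A(f)}^2\;\le\;\int_K f(x)^2\,\abs{\nu P^{n_0}(\dint x)-\pi_\rho(\dint x)}\;\le\;2\,\norm{f}_\infty^2\,\norm{\nu P^{n_0}-\pi_\rho}_{\mbox{tv}},
\]
where the second inequality uses the identity $\abs{\nu P^{n_0}-\pi_\rho}(K)=2\norm{\nu P^{n_0}-\pi_\rho}_{\mbox{tv}}$ from \cite{RoRo04}. You should be aware, though, that the first inequality in this chain is itself a Cauchy--Schwarz application with respect to the measure $\abs{\nu P^{n_0}-\pi_\rho}$ and silently requires its total mass to be at most $1$, i.e.\ $\norm{\nu P^{n_0}-\pi_\rho}_{\mbox{tv}}\le 1/2$; without that restriction one only obtains your factor $4$ (a two-point example with $f=\pm1$ and mutually singular measures shows $4$ is sharp). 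So your version is the honest worst-case estimate; to literally match the theorem you would need to adopt the paper's step (and the implicit smallness of the total variation distance, which does hold in every subsequent application of the theorem, where it is driven to $O(\eps)$).
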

\begin{proof}
 With an abuse of notation let us denote 
\[
 A(f)=\int_K f(x)\, \pi_\rho(\dint x)
\quad \mbox{and} \quad \nu P^{n_0} (f) = \int_K f(x)\, \nu P^{n_0}(\dint x).
\]
 We decompose the error into variance and bias. 
 Then
 \begin{eqnarray*}
   e(M_{n,n_0}(f,\rho))^2 & = &
  \frac{1}{n} \int_K \abs{f(x)-\nu P^{n_0} (f)}^2 \nu P^{n_0}(\dint x) + \abs{\nu P^{n_0}(f)-A(f)}^2 \\
 & = & \frac{1}{n}  \left( \nu P^{n_0}(f^2)-\nu P^{n_0}(f)^2 \right)
	+ \abs{\nu P^{n_0}(f)-A(f)}^2 \\
 &  \leq &  \frac{1}{n}  \norm{f}_{\infty}^2 + \int_K f(x)^2 \abs{\nu P^{n_0}(\dint x)- \pi_\rho(\dint x)}\\
 & \leq & \frac{1}{n}  \norm{f}_{\infty}^2 + 2 \norm{f}_{\infty}^2 \norm{\nu P^{n_0}-\pi_\rho}_{\mbox{tv}}.
  \end{eqnarray*}
 The last inequality follows by a well known characterization of the total variation distance, see 
 for example \cite[Proposition~3]{RoRo04}.
\end{proof}

Very often there exists
a number $\beta\in[0,1)$ and a number $C_\nu<\infty$ such that
\[
  \norm{\nu P^n - \pi_\rho}_{\mbox{tv}} \leq C_\nu \beta^n.
\]
For example, if $\beta= \norm{P- A}_{L_2\to L_2}<1$ and 
$C_\nu = \frac{1}{2} \norm{\nu-\pi_\rho}_{2}$, see \cite{RoRo97} for more details.
Let us define the $L_2$-spectral gap as
\[
\Gap(P)=1-\norm{P-A}_{L_2\to L_2}.
\]
This is a significant quantity, see for instance \cite{Al87,Ru12,So97,Ul11,Ul12}. 
In \cite{Ru12} it is shown that 
\[
    e(S_{n,n_0}(f,\rho))^2 \leq \frac{4 \norm{f}_{4}}{n\, \Gap(P)} 
  \qquad \mbox{for} \qquad 
  n_0 \geq \frac{\log \left(64 \norm{\frac{d\nu}{d\pi_\rho}-1}_{2}\right)}{ \Gap(P)} .
\]
There are several Markov chains where it is possible to provide, 
for certain classes of density functions,
 a lower bound of $\Gap(P)$
which grows polynomially with respect to the dimension, see for example \cite{LoVe06,MaNo07}. 
Then, the error bound of the single run Markov chain Monte Carlo method might
imply that the integration problem $(\ref{eq: sol})$ is 
tractable with respect to some $\kappa$.

Note that there are also other possible approximation schemes and other bounds of the error of $S_{n,n_0}$   
which depend on different assumptions to the Markov chain
(e.g. Ricci curvature condition, drift condition, small set), 
see for instance \cite{JoOl10,LatMiNi11-1,LatMiNi11,LatNi11}. 
For example one might consider a multi run Markov chain Monte Carlo method 
where function values of a trajectory of each Markov chain after a sufficiently large $n_0$ are used. 
But all known error bounds of such methods include quantities
such as the $L_2$-spectral gap or the conductance.

It is not an easy task to prove that a Markov chain
satisfies the different assumptions stated above
and it is also not an easy task to prove 
a lower bound of the $L_2$-spectral gap.
It might be easier to estimate 
the total variation distance of $\nu P^{n_0}$ and $\pi_\rho$ directly.
Then one can
use Theorem~\ref{thm: err_bound_m}
to show that the integration problem $(\ref{eq: sol})$ is tractable with respect to
some $\kappa$.

\section{Densities with additional structure}
\label{sec: densities}

Let us assume that the densities have some additional structure.  
For $0<r\leq R$ and $\kappa\geq 3$ a density function $\rho \colon K \to  \R_+$ is in $\mathcal{U}_{r,R,\kappa}$ 
if the following properties are satisfied:
\begin{enumerate}[(a)]
\item \label{en: log_con}
  $\rho$ is log-concave, i.e. for all $x,y\in K$ and $\lambda\in[0,1]$ one has
  \[
    \rho(\lambda x+(1-\lambda)y) \geq \rho(x)^\lambda \rho(y)^{1-\lambda}.
  \]
\item \label{en: bound} 
  $\rho$ is strictly positive, i.e. 
  $K=\supp (\rho) $ 
 and we assume that $K\subset R \ball$, where $R\ball$ is the Euclidean ball with radius $R$ around $0$.
\item \label{en: init}
  There exists a set $G\subset K$ such that

\[
 \frac{\int_K \rho(x)\, \dint x}{\vol_d(G)\;\inf_{x\in G} \rho(x)} \leq \kappa,
\]
and we can sample the uniform distribution on $G$.
 \item \label{en: lev_set} 
       For $s>0$ let $K(s)=\{ x\in K \mid \rho(x)\geq t \}$ be the level set of $\rho$
       and let $B(z,r)$ be the Euclidean ball with radius $r$ around $z$.
       Then
  \[
      \pi_\rho(K(s)) \geq \frac{1}{8}  \quad \Longrightarrow \quad \exists z \in K\quad B(z,r)\subset K(s). 
  \]
\end{enumerate}
  
The log-concavity of $\rho$ implies that the maximal value is attained on a convex set, 
that the function is continuous and that one has an isoperimetric inequality, see \cite{LoVe06}. 
Assumption (\ref{en: bound}) gives that $K$ is bounded. 

By (\ref{en: init}) we can sample the uniform distribution on $G$.
We can choose it as 
initial distribution for a Markov chain, where the number 
$\kappa$ provides an estimate of the influence of this 
initial distribution. 

The condition on the level set $K(s)$ guarantees that the peak is not too narrow.  
Roughly speaking, if the $\pi_\rho$ measure of a level set is not too small, then 
the Lebesgue measure is also not too small. 
Note that $K$ is bounded from below, since condition (\ref{en: lev_set}) implies that $B(z,r)\subset K$. 


Now we enlarge the class of densities. Let us define the following property:
\begin{enumerate}[(a')]
 \addtocounter{enumi}{1}
 \item \label{en: expect_bound}  
  $\rho$ is strictly positive, i.e. 
  $K=\supp (\rho) $ 
  and $x_\rho = \int_K x\; \pi_\rho(\dint x) \in \R^d$ is the centroid of $\pi_\rho$.
        Then
      \[
	 \int_K \abs{x-x_\rho}^2 \pi_\rho(\dint x) \leq 4\, R^2.
      \]
\end{enumerate}
We have $\rho \in \mathcal{V}_{r,R,\kappa}$ if the density $\rho$ satisfies 
(\ref{en: log_con}), (\ref{en: expect_bound}'), (\ref{en: init}) and (\ref{en: lev_set}). 
We substituted the boundedness condition (\ref{en: bound}) by (\ref{en: expect_bound}').
Note that (\ref{en: bound}) implies (\ref{en: expect_bound}'). 
Hence $\mathcal{U}_{r,R,\kappa} \subset \mathcal{V}_{r,R,\kappa}$. 
Condition (\ref{en: expect_bound}') provides a boundedness criterion in average sense.
Namely, it implies that
\[
  \int_K \int_K \abs{x-y}^2 \pi_\rho(\dint x)\; \pi_\rho(\dint y) \leq 8 R^2.
\]
\\

\emph{Example of a Gaussian function in $\mathcal{V}_{r,R,\kappa}$}.
Let $\Sigma$ be a symmetric and positive definite $d\times d$ matrix.
We consider the non-normalized density
\[
    \varphi(x)= \exp(-\frac{1}{2}\;x^T \Sigma^{-1} x ), \quad x\in \R^d.
\]
The target distribution
$\pi_\varphi$ is a normal distribution
with mean $x_\varphi=0\in\R^d$ and covariance matrix $\Sigma$.
There exists an orthogonal matrix $V=(v_1,\dots,v_d)$, where $v_1,\dots,v_d$ are the eigenvectors of $\Sigma$.
Then
\[
V^{-1} \Sigma V = \Lambda,  
\]
where $\Lambda=\diag(\lambda_1,\dots, \lambda_d)$ and $\lambda_1,\dots,\lambda_d$ 
with $\lambda_i>0$ for $i\in\{1,\dots,d\}$ are the corresponding eigenvalues of $\Sigma$.
Recall that the trace 
and the determinant of $\Sigma$ are
\[
 \tr(\Sigma)= \sum_{i=1}^d \lambda_i
  \quad \mbox{and} \quad
 \det(\Sigma) =  \prod_{i=1}^d\, \lambda_i.
\]
We show that if $r$, $R$ and $\kappa$ are appropriately chosen, then $\varphi\in\mathcal{V}_{r,R,{\kappa}}$. 
\begin{description} 
  \item[To (\ref{en: log_con}):] The density $\varphi$ is obviously log-concave.
  \item[To (\ref{en: expect_bound}'):]  Since $x_\varphi=0$ we obtain
\[
  \int_K \abs{x-x_\varphi}^2 \pi_\varphi(\dint x) 
=\frac{1}{(2\pi)^{d/2} \sqrt{\det(\Sigma)}} \int_{\R^d} \abs{x}^2 \varphi(x)\,\dint x = \tr(\Sigma).
\] 
Hence we set $R=\frac{1}{2} \sqrt{\tr(\Sigma)}$.
  \item[To (\ref{en: init}):] Let $\lambda_{\rm{min}}= \min_{i=1,\dots,d} \lambda_i$ and let $v_{\rm{min}}$ 
    be the corresponding eigenvector. 
     Note that $x^T \Sigma^{-1} x \leq \lambda_{\rm{min}}^{-1} \abs{x}^2$ and that equality holds for $x=v_{\rm{min}}$.
With $G=\ball$ we obtain
\[
\frac{\int_{\R^d} \varphi(x)\, \dint x}{\vol_d(\ball)\;\inf_{x\in \ball} \varphi(x)} 
= \exp(\frac{1}{2}\;\lambda_{\rm{min}}^{-1})\; \Gamma(d/2+1)\;2^{d/2}\sqrt{\;\det(\Sigma)} ,
\]
where $\Gamma(d)=\int_0^\infty t^{d-1} \exp(-t)\,\dint t$ is the gamma function. Hence we set
\[
  \kappa = \exp(\frac{1}{2}\;\lambda_{\rm{min}}^{-1})\; \Gamma(d/2+1)\;2^{d/2}\sqrt{\;\det(\Sigma)}.
\]
  \item[To (\ref{en: lev_set}):]  The level sets of $\varphi$ are ellipsoids 
\[
  K(s)= \{ x\in \R^d\mid x^T \Sigma^{-1} x \leq 2\log(s^{-1})	\}, \quad s\in[0,1].
\]
In general one has
\[
  \pi_\varphi(K(s)) = \frac{\int_0^\infty \vol_d(K(s)\cap K(t))\;\dint t}{\int_0^\infty \vol_d(K(t))\; \dint t} 
= \frac{s\;\vol_d(K(s))+ \int_s^\infty \vol_d(K(t))\, \dint t}{\int_0^\infty \vol_d(K(t))\; \dint t}.
\]
By the well known formula of the volume of an ellipsoid we obtain
\[
  \vol_d(K(t)) = 2^{d/2}\;\log^{d/2}(t^{-1}) \sqrt{\det(\Sigma)}\; \vol_d(\ball) ,\quad t\in[0,1]
\] 
and 
\[
  \pi_\varphi(K(s)) 
= \frac{s \;\log^{d/2}(s^{-1})+\int_s^1 \log^{d/2}(t^{-1})\;\dint t }{\int_0^1 \log^{d/2}(t^{-1})\;\dint t}, 
  \quad s\in[0,1].
\]
Hence
\[
  \pi_\varphi(K(s))= \frac{\gamma(\log s^{-1},d/2)}{\Gamma(d/2)}, \quad s\in[0,1],
\]
where $\gamma(r,d)=\int_0^r t^{d-1} \exp(-t) \,\dint t$ is the lower incomplete gamma function.
Let us define a function $r^*:\N\to\R$ by
\[
  r^*(d)=\inf	\{	r\in[0,\infty) \colon\; \gamma(r,d/2) \geq \frac{1}{8}\, \Gamma(d/2)	\}.
\]
If we substitute $1/8$ by $1/2$ in the definition of $r^*(d)$ 
we have the median of the gamma distribution with parameter $d/2$ and $1$.
It is known that the median is in $\Theta(d)$, see \cite{AdJo05}.
Figure~\ref{fig: r_star_d} suggests that $r^*(d)$ behaves also linearly in $d$.

\begin{figure} 
 \centering
 \includegraphics[width=10cm]{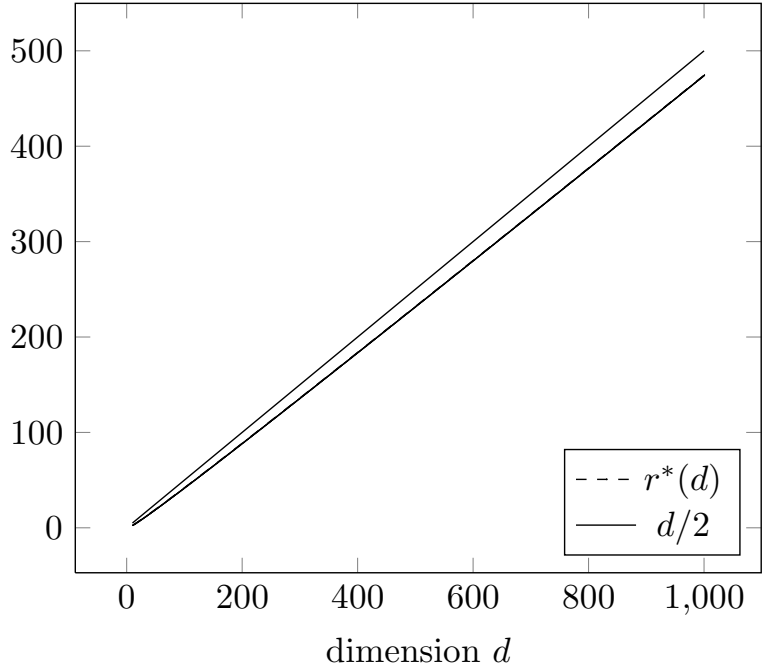}
\caption{Plot of an approximation of $r^*(d)$ with a Newton method
and an appropriately chosen initial value.}
\label{fig: r_star_d}
\end{figure}

Let $\log (s^*(d)^{-1}) = r^*(d)$, such that $s^*(d)=\exp(-r^*(d))$. Then
\[
  \pi_\varphi(K(s^*(d))) = \frac{1}{8}\quad \mbox{and} \quad B(0,(\lambda_{\rm{min}} r^*(d))^{1/2}) \subset K(s^*(d)).
\]
Hence we set $r=(\lambda_{\rm{min}} r^*(d))^{1/2}$.
\end{description}
Let us summarize. For $r=(\lambda_{\rm{min}} r^*(d))^{1/2}$, $R= \frac{1}{2} \sqrt{\tr(\Sigma)} $ 
and
\[
  \kappa = \exp(\frac{1}{2}\;\lambda_{\rm{min}}^{-1})\; \Gamma(d/2+1)\;2^{d/2}\sqrt{\;\det(\Sigma)}
\]
we obtain that $\varphi\in \mathcal{V}_{r,R,{\kappa}}$. Note that ${\kappa}$ depends exponentially
on the dimension $d$. However, if one has tractability with respect to ${\kappa}$, 
then the error depends polynomially on the dimension.


\section{Hit-and-run algorithm}  \label{sec: har}
For $\rho \colon K\to \R_+$ the hit-and-run algorithm is as follows.
Let $\nu$ be a probability measure on $(K,\Borel(K))$ and let $x_1\in K$ be chosen by $\nu$. 
For $k\in\N$ suppose that the states $x_1,\dots,x_k$ are already computed. Then 
\begin{enumerate}
 \item choose a direction $u$ uniformly distributed on $\partial \ball$;
 \item set $x_{k+1}=x_k+ \alpha \,u$, where $\alpha \in I_k=\{\a \in \R \mid x_k+\a u \in K\}$ is chosen with respect to the distribution determined by
       the density
	\[
	    \ell_k(s) = \frac{\rho(x_k+s\, u)}{\int_{I_k}  \rho(x_k+t\, u)\,\dint t}, \quad s \in I_k.
	\]
\end{enumerate}

The second step might cause implementation issues.
However, if we have a log-concave density $\rho$ then $\ell_k$ is also log-concave.
In this setting one can use different acceptance/rejection methods.
For more details see for example \cite[Section~2.4.2]{CaRo04} or \cite{LoVe07}.
In the following we assume that we can sample the distribution determined by $\ell_k$. 

Other algorithms for the approximation of $\pi_\rho$ would be a Metropolis algorithm with suitable proposal \cite{MaNo07} 
or a combination of a hit-and-run algorithm with uniform stationary distribution and a Ratio-of-uniforms method \cite{KaLePo05}.
Also hybrid samplers are promising methods, 
especially when $\rho$ decreases exponentially in the tails \cite{FoMoRoRo03}. 

Now let us state the transition kernel,  
say $H_\rho$, of the hit-and-run algorithm 
\[
  H_\rho(x,C) \;=\; \frac{2}{\vol_{d-1}(\partial B^ d)} 
  \int_C \frac{ \rho(y)\, \dint y}{\ell_\rho(x,y)\abs{x-y}^{d-1}},
  \quad x\in K,\,C\in \mathcal{B}(K),
\]
where 
\[
\ell_\rho(x,y)\;=\; \int_{-\infty}^{\infty} 
\rho(\lambda x + (1-\lambda)y) \ind_K(\lambda x + (1-\lambda)y)\, \dint \lambda.
\] 
The transition kernel $H_\rho$ is reversible with respect to $\pi_\rho$, let us refer to \cite{BeRoSm93}
for further details. 

In the following we state several results from Lova\'sz and Vempala. 
This part is based on \cite{LoVe06-1}. 
We start with a special case of \cite[Theorem~1.1]{LoVe06-1} and
sketch the proof of this theorem.

\begin{theorem} \label{thm: est_n_tv}
 Let $\eps\in(0,1/2)$ and $\rho\in \mathcal{U}_{r,R,\kappa}$. Let $\nu$ be an initial distribution with
 the following property. There exists a set $S_\eps\subset K$ 
 and a number $D\geq1$ such that
 \[
    \frac{d\nu}{d\pi_\rho}(x) \leq D, \quad x\in K\setminus S_\eps,
 \]
 where $\nu(S_\eps)\leq \eps$. Then for 
 \[
    n_0> 10^{27} (d r^{-1}\, R)^2 \log^2(8\,D\, d  r^{-1}\, R  \eps^{-1}) \log(4\,D\, \eps^{-1})
 \]
 the total variation distance between $\nu H_\rho^{n_0}$ and $\pi_\rho$ is less than $2\eps$.
\end{theorem}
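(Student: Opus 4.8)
\emph{Sketch of a proof.} The plan is to follow Lov\'asz and Vempala \cite{LoVe06-1,LoVe06}: reduce the statement to a lower bound on the $s$-conductance of $H_\rho$, and then feed that bound into the Lov\'asz--Simonovits convergence estimate. For $s\in[0,1/2)$ set
\[
 \Phi_s \;=\; \inf_{s<\pi_\rho(A)\le 1/2}\;\frac{\int_A H_\rho(x,K\setminus A)\,\pi_\rho(\dint x)}{\pi_\rho(A)-s}.
\]
I would first recall (cf.\ \cite{LoVe06-1} and the underlying Lov\'asz--Simonovits lemma) that if $\Phi_s\ge\phi$ and the initial distribution $\nu$ has $\frac{\dint\nu}{\dint\pi_\rho}\le D$ off a set $S_\eps$ with $\nu(S_\eps)\le\eps$, then a bound of the shape
\[
 \norm{\nu H_\rho^{n_0}-\pi_\rho}_{\mbox{tv}}\;\le\; \eps + C\,D\,s + C\,D\bigl(1-\frac{\phi^{2}}{2}\bigr)^{n_0}
\]
holds with an absolute constant $C$. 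So it suffices to exhibit a threshold $s_0$ and a lower bound $\Phi_s\ge\phi$ for all $s\le s_0$, with $\phi$ of order $r\,(dR)^{-1}$ up to a logarithmic factor in $8Dd r^{-1}R\eps^{-1}$, and then to take $s$ below $\min\{s_0,\eps/(4CD)\}$ and $n_0$ large enough that the last term is $\le\eps/(4CD)$; since $n_0\ge(\text{const})\cdot\phi^{-2}\log(4CD\eps^{-1})$ suffices for the latter and $\phi^{-2}\asymp (dR/r)^2\log^2(8Dd r^{-1}R\eps^{-1})$, this reproduces the displayed lower bound on $n_0$ and yields $\norm{\nu H_\rho^{n_0}-\pi_\rho}_{\mbox{tv}}<2\eps$, the gap between the $n_0$ in the statement and what I wrote being absorbed into the numerical constant $10^{27}$.

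The conductance estimate rests on two inputs. \emph{Isoperimetry:} since $\rho$ is log-concave (assumption (\ref{en: log_con})), $\pi_\rho$ satisfies the log-concave isoperimetric inequality of \cite{LoVe06}, namely $\pi_\rho(S_3)\ge c_1\,d_K(S_1,S_2)\,\pi_\rho(S_1)\pi_\rho(S_2)$ for every partition $K=S_1\cup S_2\cup S_3$, where $d_K$ is the cross-ratio distance; because $K\subset R\ball$ by (\ref{en: bound}) one has $d_K(u,v)\ge |u-v|/(2R)$, so this may be used with $|u-v|/R$ in place of $d_K$. \emph{One-step overlap:} this is the technical heart, a special case of the corresponding lemma of \cite{LoVe06-1}. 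One introduces an exceptional set $K_\eps\subset K$ — the points too near $\partial K$ together with the points where $\rho$ is far below its typical value — and shows that for $x,y\in K\setminus K_\eps$ whose Euclidean distance is below an effective one-step scale $\delta$, a quantity which outside $K_\eps$ is bounded below in terms of $r$ and $d$ (the precise dependence is in \cite{LoVe06-1}), one has $\norm{H_\rho(x,\cdot)-H_\rho(y,\cdot)}_{\mbox{tv}}\le 1-c_2$; here assumption (\ref{en: lev_set}), which forbids a narrow peak, is exactly what keeps $\pi_\rho(K_\eps)$ below the tolerance required. Combining the two by the standard conductance argument — split $A$ and $K\setminus A$ into their ``active'' and ``sluggish'' parts, observe via the overlap lemma that the two sluggish parts are Euclidean-separated by at least $\delta$, apply isoperimetry to the complement, invoke reversibility of $H_\rho$, and charge $\pi_\rho(K_\eps)$ to $s$ — yields $\Phi_s\ge\phi$ of the order recorded above for all $s$ up to a threshold comparable to $\delta$.

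I expect the one-step overlap lemma to be the main obstacle. For a non-uniform log-concave weight $\rho$ one has to control how the one-dimensional log-concave conditionals $\ell_k$ of the hit-and-run step along random lines move as the base point varies, and one has to pin down the exceptional set $K_\eps$ and check, using (\ref{en: lev_set}) together with the non-degeneracy of $\pi_\rho$ that comes from (\ref{en: init}), that its $\pi_\rho$-mass is negligible. What remains is purely quantitative: carrying the absolute constant ($10^{27}$) and the exact powers of the logarithms through the conductance bound and the Lov\'asz--Simonovits iteration, which amounts to following \cite{LoVe06-1} line by line.
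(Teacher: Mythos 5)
Your proposal follows essentially the same route as the paper: reduce to a lower bound on the $s$-conductance of $H_\rho$ (the paper cites \cite[Theorem~3.7]{LoVe06-1} for this, which is the isoperimetry-plus-one-step-overlap argument you sketch) and feed it into the Lov\'asz--Simonovits convergence estimate \cite[Corollary~1.6]{LoSi93} with $s$ of order $\eps/D$, then tune $n_0$. The only cosmetic difference is that the paper handles the exceptional set $S_\eps$ by explicitly splitting $\nu=(1-\tilde\eps)\mu_1+\tilde\eps\mu_2$ (which replaces $D$ by $2D$) rather than absorbing $\nu(S_\eps)$ directly into the convergence bound as you do; both are standard and equivalent here.
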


\begin{proof}[Sketch]
\begin{enumerate}
 \item Let us assume that $S_\eps=\emptyset$:\\
  Then it follows
  $
      \norm{\frac{d\nu}{d\pi_\rho}}_{\infty} \leq D,
  $ 
so that $\nu\in \mathcal{M}_\infty$. 
 We use \cite[Corollary~1.6]{LoSi93} with $s=\frac{\eps}{2 D}$ and obtain
\[
        \norm{\nu H_\rho^n - \pi_\rho}_{\mbox{tv}} \leq \eps/2 + D \exp({-\frac{1}{2}\,n\; \Phi^2_{\frac{\eps}{2D}} }),
\]
 where $\Phi_{\frac{\eps}{2 D}}$ is the $\frac{\eps}{2D}$-conductance
 of $H_\rho$. By Theorem~3.7 of \cite{LoVe06-1} and the scaling invariance of the hit-and-run algorithm we find 
 a lower bound of $\Phi_{\frac{\eps}{2D}}$. 
 It is given by
  \begin{equation} \label{eq: low_scond}
       \Phi_{\frac{\eps}{2 D}} \geq \frac{10^{-13}}{2\;d r^{-1}\, R  \log(4\,d r^{-1}\, R \, D\; \eps^{-1})}.
   \end{equation}
 
  This leads to
  \begin{equation} \label{eq: est_tv_scond}
     \norm{\nu H_\rho^n - \pi_\rho}_{\mbox{tv}} \leq \eps/2 + 
	D\, \exp\left({\frac{ -10^{-26}\,n}{8\,(d r^{-1}\, R)^2 \log^2(4\,d r^{-1}\, R\,D\; \eps^{-1})} }\right).
  \end{equation}

\item Now let us assume that $S_\eps \neq \emptyset$:\\
Let $\tilde{\eps}:=\nu(S_\eps)$, so that $0<\tilde{\eps}\leq \eps \leq 1/2$ and for $C\in\Borel(K)$ let
\[
  \mu_1(C)=\frac{\nu(C\cap S_\eps^c)}{\nu(S_\eps^c)} \quad \mbox{and} \quad \mu_2(C)=\frac{\nu(C\cap S_\eps)}{\nu(S_\eps)}.
\]
Then 
\[
  \nu = (1-\tilde{\eps})\mu_1 + \tilde{\eps} \mu_2
\]
and $\norm{\frac{d\mu_1}{d\pi_\rho}}_{\infty}\leq 2 D$. 
Furthermore for any $C\in\Borel(K)$ we find
\begin{eqnarray*}
 \abs{\nu H^n_\rho(C)-\pi_\rho(C)} \leq (1-\tilde{\eps}) \abs{\mu_1 H^n_\rho(C)-\pi_\rho(C)} + \tilde{\eps}.
\end{eqnarray*}
By using $(\ref{eq: est_tv_scond})$ we get
\[
\norm{\mu_1 H_\rho^n - \pi_\rho}_{\mbox{tv}} \leq  \eps/2 + 
	2\,D \exp\left({\frac{ -10^{-26}\,n}{8\,(d r^{-1}\, R)^2 \log^2(8\,d r^{-1}\, R\,D\, \eps^{-1})} }\right),
\]
and altogether 
\begin{equation} \label{eq: final_est_tv}
  \norm{\nu H_\rho^n - \pi_\rho}_{\mbox{tv}} \leq 3\,\eps/2 + 2\,D 
   \exp\left({\frac{ -10^{-26}\,n}{8\,(d r^{-1}\, R)^2 \log^2(8\,d r^{-1}\, R\,D\, \eps^{-1})} }\right).
\end{equation}
Choosing $n$ so that the right hand side of the previous equation is less than or equal to $2\eps$ completes the proof.
\end{enumerate}
\end{proof}
The next Corollary provides an explicit upper bound of the total variation distance.

\begin{corollary} \label{coro: est_har_tv}
Under the assumptions of Theorem~\ref{thm: est_n_tv} with
 \[
    \beta=\exp\left({\frac{-10^{-9}}{(d r^{-1}\, R)^{2/3}}}\right) \quad
\mbox{and} \quad C=12\,dr^{-1}\, R\,D
 \]
one obtains
    \[
    \norm{\nu H^{n_0}_\rho-\pi_\rho}_{\mbox{tv}} \leq C\; \beta^{\sqrt[3]{n_0}}, \quad n\in\N.
   \]
\end{corollary}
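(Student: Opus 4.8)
The plan is to start from the bound (\ref{eq: final_est_tv}) in the proof sketch of Theorem~\ref{thm: est_n_tv}, which already gives
\[
  \norm{\nu H_\rho^{n_0} - \pi_\rho}_{\mbox{tv}} \leq \tfrac{3}{2}\eps + 2D
   \exp\!\left(\frac{-10^{-26}\,n_0}{8\,(dr^{-1}R)^2 \log^2(8\,dr^{-1}R\,D\,\eps^{-1})}\right),
\]
and to turn this into the clean geometric-type decay $C\beta^{\sqrt[3]{n_0}}$. The first step is to dispose of the additive $\tfrac32\eps$ term: since $\eps\in(0,1/2)$ is free in Theorem~\ref{thm: est_n_tv}, I would specialize the statement by choosing $\eps$ as a function of $n_0$ — concretely something like $\eps = \exp(-c\,n_0^{1/3}/(dr^{-1}R)^{2/3})$ for a suitable absolute constant $c$ — so that the $\eps$-term is itself of the desired form $\lesssim \beta^{\sqrt[3]{n_0}}$ and gets absorbed into $C\beta^{\sqrt[3]{n_0}}$. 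One must check that this choice of $\eps$ is admissible, i.e. still in $(0,1/2)$, which holds for $n_0\geq 1$ once $c$ is not too large, and that the hypothesis $\nu(S_\eps)\leq\eps$ is to be read with this $\eps$.

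The second and main step is to bound the exponential term. Writing $L = dr^{-1}R$ and $a = 8L^2$, the exponent is $-10^{-26} n_0 / (a\log^2(8LD\eps^{-1}))$. With the above choice of $\eps$, $\log(\eps^{-1})$ is of order $n_0^{1/3}/L^{2/3}$, so $\log^2(8LD\eps^{-1})$ is of order $(\log(8LD) + n_0^{1/3}/L^{2/3})^2 \lesssim n_0^{2/3}/L^{4/3}$ up to the $D$- and $L$-dependent logarithmic factors, which are harmless since they only inflate the constant $C$. Dividing $n_0$ by this gives an exponent of order $-n_0^{1/3}/L^{2/3}$, which is exactly the exponent appearing in $\beta = \exp(-10^{-9}/(dr^{-1}R)^{2/3})$, i.e. $\beta^{\sqrt[3]{n_0}} = \exp(-10^{-9} n_0^{1/3}/L^{2/3})$. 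The remaining bookkeeping is to verify that the leading constant $2D$ together with the polynomial-in-$L,D$ correction from the logarithm fits inside $C = 12\,dr^{-1}R\,D$; here one uses $\log^2(x) \leq C'_\delta x^\delta$ type estimates to trade the logarithmic factors against a small power of $L$ and $D$, and then checks the numerical constants ($10^{-26}$, $10^{-9}$, the factor $12$) line up — this is where the somewhat arbitrary-looking constant $10^{-9}$ in $\beta$ comes from.

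The main obstacle I expect is purely the constant-chasing in the second step: one has to choose the absolute constant $c$ in $\eps = \exp(-c\,n_0^{1/3}L^{-2/3})$ small enough that, after substitution, the exponent $-10^{-26}n_0/(a\log^2(\cdots))$ dominates $-10^{-9}n_0^{1/3}L^{-2/3}$ uniformly in $n_0\geq 1$, $L\geq 1$, $D\geq 1$ (using $L\geq 1$, which follows from $r\leq R$ and $d\geq 1$, and $D\geq 1$ by hypothesis), while simultaneously keeping $\eps<1/2$ and absorbing all stray logarithms into $12LD$. None of this is conceptually hard, but the interplay between the three scales ($n_0$, the precision encoded in $\eps$, and the geometric parameters $L,D$) has to be handled carefully; a convenient device is to split into the two regimes $\log(8LD)\leq n_0^{1/3}L^{-2/3}$ and its complement, treating the large-$n_0$ regime by the above and the bounded regime by noting that there $\beta^{\sqrt[3]{n_0}}$ is bounded below by an absolute constant so the claimed bound is trivial once $C$ is large enough.
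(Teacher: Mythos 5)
Your proposal is correct and is essentially the paper's own proof: the paper simply sets $\eps = 8\,dr^{-1}R\,D\,\exp\bigl(-10^{-9}n_0^{1/3}/(dr^{-1}R)^{2/3}\bigr)$ and substitutes into $(\ref{eq: final_est_tv})$. The only difference is that by including the prefactor $8\,dr^{-1}R\,D$ in the choice of $\eps$, the logarithm $\log(8\,dr^{-1}R\,D\,\eps^{-1})$ collapses exactly to $10^{-9}n_0^{1/3}(dr^{-1}R)^{-2/3}$, so the constant-chasing and regime-splitting you anticipate reduce to a one-line computation.
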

\begin{proof}
 Set $\eps=8\, d r^{-1}\, R\, D \exp\left(\frac{-10^{-9} \; n^{1/3}}{(dr^{-1}\, R)^{2/3}}\right)$ and use $(\ref{eq: final_est_tv})$ to complete the proof. 
\end{proof}
Note that the result of Theorem~\ref{thm: est_n_tv} is better than the result of Corollary~\ref{coro: est_har_tv}.
However, Corollary~\ref{coro: est_har_tv} provides an explicit estimate of the total variation distance. One can see that
there is an almost exponential decay, namely the total variation distance goes to zero at least as $\beta^{\sqrt[3]{n_0}}$ goes to zero for 
increasing $n_0$. 

In the previous results we assumed that $\rho \in \mathcal{U}_{r,R,\kappa}$. 
It is essentially used that (\ref{en: bound}) holds. 
Now let us assume that $\rho\in \mathcal{V}_{r,R,\kappa}$. 
The next statement is proven in \cite[Theorem~1.1]{LoVe06-1}.

\begin{theorem} \label{thm: gen_tv_est}
Let $\eps\in(0,1/2)$, $\rho\in \mathcal{V}_{r,R,\kappa}$.
Let $\nu$ be an initial distribution with the following property. 
There exists a set $S_\eps\subset K$ and a number $D\geq1$ such that
 \[
    \frac{d\nu}{d\pi_{\rho}}(x) \leq D, \quad x\in K\setminus S_\eps,
 \]
 where $\nu(S_\eps)\leq \eps$. Then for 
 \[
    n_0 \geq 4\cdot 10^{30} (d r^{-1}\, R)^2 \log^2(2\; D\, d r^{-1}\, R\,\eps^{-1} ) \log^3(D\, \eps^{-1})
 \]
 the total variation distance between $\;\nu H_{\rho}^{n_0}$ and $\pi_\rho$ is less than $2\eps$.
\end{theorem}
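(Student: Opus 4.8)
The plan is to follow the argument sketched for Theorem~\ref{thm: est_n_tv}; indeed this statement is \cite[Theorem~1.1]{LoVe06-1} specialized to $\mathcal{V}_{r,R,\kappa}$, and only the step that uses boundedness of $K$ has to be changed. Observe first that neither $\kappa$ nor assumption (\ref{en: init}) enters the conclusion, so the whole argument concerns an arbitrary initial distribution $\nu$ with the stated warmth property, and the membership $\rho\in\mathcal{V}_{r,R,\kappa}$ is used only through log-concavity (\ref{en: log_con}), the level-set condition (\ref{en: lev_set}), and the average-boundedness condition (\ref{en: expect_bound}').

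First I would carry out the same reduction to the case $S_\eps=\emptyset$: put $\tilde{\eps}=\nu(S_\eps)\in(0,\eps]$ and write $\nu=(1-\tilde{\eps})\mu_1+\tilde{\eps}\mu_2$ with $\mu_1(C)=\nu(C\cap S_\eps^c)/\nu(S_\eps^c)$, so that $\norm{d\mu_1/d\pi_\rho}_\infty\leq 2D$ and $\abs{\nu H_\rho^n(C)-\pi_\rho(C)}\leq(1-\tilde{\eps})\abs{\mu_1 H_\rho^n(C)-\pi_\rho(C)}+\tilde{\eps}$ for every $C\in\Borel(K)$. It therefore suffices to bound the mixing of a warm start $\mu_1\in\mathcal{M}_\infty$ with density at most $2D$, and for this I would apply \cite[Corollary~1.6]{LoSi93} with $s$ of order $\eps/D$, which gives $\norm{\mu_1 H_\rho^n-\pi_\rho}_{\mbox{tv}}\leq\eps/2+2D\exp(-\frac{1}{2} n\Phi_s^2)$ in terms of the $s$-conductance $\Phi_s$ of $H_\rho$.

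The genuinely new ingredient, and the part I expect to be hardest, is the lower bound for $\Phi_s$ when $K=\supp(\rho)$ is unbounded, since Theorem~3.7 of \cite{LoVe06-1} is stated for a bounded body. I would handle this by truncation: using the exponential tail decay of log-concave measures, which is a consequence of (\ref{en: expect_bound}'), choose a radius $\tau R$ with $\tau$ of order $\log(D\eps^{-1})$ so that $\pi_\rho$ assigns mass of order $s$ to the complement of the ball $B(x_\rho,\tau R)$; then run the isoperimetry-and-conductance estimate on the truncated convex body $K\cap B(x_\rho,\tau R)$, where (\ref{en: lev_set}) controls the narrowness parameter $r$ and $\tau R$ plays the role of the diameter. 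Together with the scaling invariance of the hit-and-run algorithm this should produce $\Phi_s\gtrsim (d\,\tau R\,r^{-1}\log(d\,\tau R\,r^{-1}D\eps^{-1}))^{-1}$ for an absolute constant; the additional factor $\tau\asymp\log(D\eps^{-1})$ is precisely what upgrades the single factor $\log(4D\eps^{-1})$ of Theorem~\ref{thm: est_n_tv} to the $\log^3(D\eps^{-1})$ appearing here, once the conductance is squared in the exponent above and one further logarithm is lost in inverting the exponential.

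Finally I would substitute this bound into the Lov{\'a}sz--Simonovits estimate, pick $n_0$ so that $2D\exp(-\frac{1}{2} n_0\Phi_s^2)\leq\eps/2$, which yields the stated threshold $n_0\geq 4\cdot10^{30}(dr^{-1}R)^2\log^2(2Ddr^{-1}R\eps^{-1})\log^3(D\eps^{-1})$, and add the errors $\eps/2$ from the conductance bound and $\tilde{\eps}\leq\eps$ from the reduction to conclude $\norm{\nu H_\rho^{n_0}-\pi_\rho}_{\mbox{tv}}\leq 2\eps$. The main obstacle is making the truncation rigorous --- quantifying the radius $\tau R$ and checking that the $s$-conductance of hit-and-run on the unbounded support is governed, up to the logarithmic loss, by that of the chain restricted to $K\cap B(x_\rho,\tau R)$ --- which is the technical heart of \cite{LoVe06-1} and the reason this case is slightly harder than that of Theorem~\ref{thm: est_n_tv}.
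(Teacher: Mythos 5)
Your proposal is consistent with what the paper actually does: the paper gives no proof of Theorem~\ref{thm: gen_tv_est} at all, stating only that it ``is proven in \cite[Theorem~1.1]{LoVe06-1}'', so any correct argument here necessarily rests on Lov\'asz--Vempala. Your reconstruction mirrors the paper's sketch of Theorem~\ref{thm: est_n_tv} (the $S_\eps$-decomposition, the Lov\'asz--Simonovits $s$-conductance bound, and the bookkeeping showing how a truncation radius of order $R\log(D\eps^{-1})$ turns the single factor $\log(4D\eps^{-1})$ into $\log^3(D\eps^{-1})$), and the one step you flag as unproved --- the $s$-conductance lower bound for hit-and-run on an unbounded support --- is exactly the content of the cited theorem, which the paper does not reprove either.
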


Note that Theorem~\ref{thm: est_n_tv} and Theorem~\ref{thm: gen_tv_est} 
can be applied if the initial distribution is bounded, 
i.e. we can set $D=\norm{\frac{d\nu}{d\pi_\rho}}_{\infty}$ and $S_\eps=\emptyset$.
Furthermore if $\nu \in \mathcal{M}_2$, i.e. $\norm{\frac{d\nu}{d\pi_\rho}}_{2}$ is bounded, then
we can also apply Theorem~\ref{thm: est_n_tv} and Theorem~\ref{thm: gen_tv_est} 
with $D= \norm{\frac{d\nu}{d\pi_\rho}}_{2}^2 \eps^{-1}$ and 
      \[
	S_\eps=\left\{ x\in K\mid \frac{d\nu}{d\pi_\rho}(x) >  \norm{\frac{d\nu}{d\pi_\rho}}_{2}^2 \eps^{-1} 	\right\}.
      \]

\section{Main results}  \label{sec: main_res}
Now we are able to state and to prove the main results. To avoid any pathologies we assume that $r^{-1} R d \geq 3$.

\begin{theorem}   \label{thm: Err}
 Let $\eps\in(0,1/2)$ and  
  \[
    \mathcal{F}_{r,R,{\kappa}}=
  \left\{ (f,\rho) 
  \mid \rho\in \mathcal{U}_{r,R, {\kappa}},\;
      \norm{f}_{\infty}\leq 1 \right\}.
  \]
 For $(f,\rho)\in \mathcal{F}_{r,R,{\kappa}}$ let $\nu$ be the uniform distribution on $G\subset \R^d$ from (\ref{en: init}). 
 Let $X^1_{n_0},\dots,X_{n_0}^n$ 
 be a sequence of the $n_0$th steps of $n$ independent hit-and-run Markov chains with stationary distribution $\pi_\rho$
 and initial distribution $\nu$.
 Recall that
\[
   M_{n,n_0}(f,\rho)   = \frac{1}{n}  \sum_{j=1}^n f(X_{n_0}^j).  
\]
 Then for  $n\geq \eps^{-2}$ and
 \[
    n_0\geq 10^{27} (d r^{-1}\, R)^2 \log^2(8\,d r^{-1}\, R\,\kappa\; \eps^{-2})\log(4\kappa \;\eps^{-2})
 \]
 we obtain  
 \[
    \sup_{(f,\rho)\in\mathcal{F}_{r,R,{\kappa}}} e(M_{n,n_0}(f,\rho)) \leq 3\eps.
 \]
 Hence 
  \[
    t_{\eps,{\kappa}}(n,n_0)
 =\mathcal{O} (d^{2}\,(r^{-1}\, R)^2\,\log^2(d r^{-1}\, R) \,\eps^{-2}\,[\log \eps^{-1}]^3
		    \, [\log\kappa]^3).
  \]

\end{theorem}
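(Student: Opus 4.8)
The plan is to combine the multi-run error bound of Theorem~\ref{thm: err_bound_m} with the total variation estimate of Theorem~\ref{thm: est_n_tv}, where the only work is to produce the constant $D$ controlling the density $\frac{d\nu}{d\pi_\rho}$ of the chosen initial distribution $\nu$ (the uniform distribution on $G$) and then to track how this $D$ enters the lower bound on $n_0$. First I would recall from Theorem~\ref{thm: err_bound_m} that for $\norm{f}_\infty\leq 1$,
\[
  e(M_{n,n_0}(f,\rho))^2 \leq \frac{1}{n} + 2\,\norm{\nu H_\rho^{n_0}-\pi_\rho}_{\mbox{tv}},
\]
so that $n\geq \eps^{-2}$ already contributes $\eps^2$ to the squared error; it then suffices to force $\norm{\nu H_\rho^{n_0}-\pi_\rho}_{\mbox{tv}} \leq 4\eps^2$ to get $e\leq\sqrt{5}\,\eps\leq 3\eps$. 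So I would aim to apply Theorem~\ref{thm: est_n_tv} with $\eps$ replaced by $2\eps^2$ (legitimate since $2\eps^2\in(0,1/2)$ when $\eps\in(0,1/2)$), which then yields total variation distance at most $2\cdot 2\eps^2 = 4\eps^2$.

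The key step is the bound on $D$. Since $\nu$ is uniform on $G$, its density with respect to Lebesgue measure is $\ind_G/\vol_d(G)$, while $\pi_\rho$ has density $c\,\rho$ with $c^{-1}=\int_K\rho$. Hence for $x\in G$,
\[
  \frac{d\nu}{d\pi_\rho}(x) = \frac{1}{\vol_d(G)\,c\,\rho(x)} = \frac{\int_K \rho(y)\,\dint y}{\vol_d(G)\,\rho(x)} \leq \frac{\int_K \rho(y)\,\dint y}{\vol_d(G)\,\inf_{x\in G}\rho(x)} \leq \kappa,
\]
using condition~(\ref{en: kappa_G})/(\ref{en: init}); and $\frac{d\nu}{d\pi_\rho}(x)=0$ outside $G$. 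Thus $\norm{\frac{d\nu}{d\pi_\rho}}_\infty\leq\kappa$, so I may take $D=\kappa$ and $S_\eps=\emptyset$ in Theorem~\ref{thm: est_n_tv}. Plugging $D=\kappa$ and the replacement $\eps\mapsto 2\eps^2$ into the $n_0$-threshold of Theorem~\ref{thm: est_n_tv} gives exactly (up to absorbing the factor $2$ into the logarithms, using $\kappa\geq 3$ and $dr^{-1}R\geq 3$)
\[
  n_0 > 10^{27}(dr^{-1}R)^2 \log^2(8\,dr^{-1}R\,\kappa\,\eps^{-2})\log(4\,\kappa\,\eps^{-2}),
\]
which is the stated condition. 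Then $e(M_{n,n_0}(f,\rho))^2 \leq \eps^2 + 2\cdot 4\eps^2 \le 9\eps^2$, so $e\leq 3\eps$ uniformly over $\mathcal F_{r,R,\kappa}$.

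Finally, for the tractability statement I would read off $t(n,n_0)=n\cdot n_0$ for the multi-run method, take $n=\lceil\eps^{-2}\rceil$ and $n_0$ equal to the displayed threshold, and simplify: $n_0 = \mathcal O\big((dr^{-1}R)^2\log^2(dr^{-1}R)\,[\log(\kappa\eps^{-2})]^2\,\log(\kappa\eps^{-2})\big)$, and since $\log(\kappa\eps^{-2}) = \mathcal O(\log\kappa + \log\eps^{-1})$ one gets $[\log(\kappa\eps^{-2})]^3 = \mathcal O([\log\eps^{-1}]^3[\log\kappa]^3 + \dots)$; multiplying by $n = \mathcal O(\eps^{-2})$ yields the claimed $\mathcal O(d^2(r^{-1}R)^2\log^2(dr^{-1}R)\,\eps^{-2}[\log\eps^{-1}]^3[\log\kappa]^3)$. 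The only mildly delicate point — and the one I expect to be the main obstacle — is the bookkeeping that turns the mixed logarithmic term $\log^2(dr^{-1}R\,\kappa\,\eps^{-2})\log(\kappa\eps^{-2})$ into a clean product of powers of $\log d$, $\log\eps^{-1}$, and $\log\kappa$ with the right exponents; this is routine but must be done carefully using $\log(abc)\leq 3\max\{\log a,\log b,\log c\}$ type estimates together with the standing assumption $r^{-1}Rd\geq 3$ so that no logarithm is negative or vanishing.
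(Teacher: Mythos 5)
Your proposal is correct and follows exactly the paper's own route: bound $\frac{d\nu}{d\pi_\rho}\leq\kappa$ for the uniform distribution on $G$ via condition (\ref{en: init}), apply Theorem~\ref{thm: est_n_tv} with $D=\kappa$, $S_\eps=\emptyset$ and $\eps$ rescaled to order $\eps^2$, and combine with Theorem~\ref{thm: err_bound_m}; you simply fill in the bookkeeping the paper leaves implicit. (Minor slip: with TV distance $\leq 4\eps^2$ the squared error is $\leq 9\eps^2$, giving $e\leq 3\eps$ rather than $\sqrt{5}\,\eps$, which your final computation gets right anyway.)
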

\begin{proof}
 For $C\in\Borel(K)$ we have
  \[
    \nu(C) = \int_C \frac{\mathbf{1}_G(y) \int_K \rho(x)\, \dint x}{\vol_d(G) \rho(y)} \,\pi_\rho(\dint y).
  \]
 It implies that $\frac{d\nu}{d\pi_\rho} (x) \leq \kappa$ for all $x\in K$. 
 Then the assertion follows by Theorem~\ref{thm: err_bound_m} and Theorem~\ref{thm: est_n_tv}.
\end{proof}

Now let us consider densities which belong to $\mathcal{V}_{r,R,\kappa}$.

\begin{theorem}  \label{thm: gen_Err}
Let $\eps\in(0,1/2)$ and  
  \[
    \mathcal{G}_{r,R,{\kappa}}=
  \left\{ (f,\rho) 
  \mid \rho\in \mathcal{V}_{r,R, {\kappa}},\;
      \norm{f}_{\infty}\leq 1 \right\}.
  \]
 Let $M_{n,n_0}$ be given as in Theorem~\ref{thm: Err}.
 Then for  $n\geq \eps^{-2}$ and
 \[
    n_0\geq 4\cdot 10^{30} (d r^{-1}\, R)^2 \log^2(2\,d r^{-1}\, R\,\kappa\; \eps^{-2})\log^3(\kappa \;\eps^{-2})
 \]
 we obtain  
 \[
    \sup_{(f,\rho)\in\mathcal{G}_{r,R,{\kappa}}} e(M_{n,n_0}(f,\rho)) \leq 3\eps.
 \]
 Hence 
  \[
    t_{\eps,{\kappa}}(n,n_0)
 =\mathcal{O} (d^{2}\,(r^{-1}\, R)^2\,\log^2(d r^{-1}\, R) \,\eps^{-2}\,[\log \eps^{-1}]^5
		    \, [\log\kappa]^5).
  \]
\end{theorem}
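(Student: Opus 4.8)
The plan is to mirror the proof of Theorem~\ref{thm: Err} almost verbatim, replacing the total variation estimate for densities in $\mathcal{U}_{r,R,\kappa}$ (Theorem~\ref{thm: est_n_tv}) by its counterpart for densities in $\mathcal{V}_{r,R,\kappa}$ (Theorem~\ref{thm: gen_tv_est}). First I would observe that, exactly as in Theorem~\ref{thm: Err}, for $C\in\Borel(K)$ the initial distribution $\nu$, being the uniform distribution on $G$, satisfies
\[
 \nu(C) = \int_C \frac{\mathbf{1}_G(y)\, \int_K \rho(x)\,\dint x}{\vol_d(G)\,\rho(y)}\, \pi_\rho(\dint y),
\]
and hence, by assumption (\ref{en: init}) together with $\rho(y)\geq \inf_{x\in G}\rho(x)$ for $y\in G$, we get $\frac{d\nu}{d\pi_\rho}(x) \leq \kappa$ for all $x\in K$. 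Thus $\nu\in\mathcal{M}_\infty$ and we may apply Theorem~\ref{thm: gen_tv_est} with the bounded-density choice $D=\norm{\frac{d\nu}{d\pi_\rho}}_{\infty}\leq \kappa$ and $S_\eps=\emptyset$.

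Next I would fix the accuracy level. Apply Theorem~\ref{thm: err_bound_m} with the chosen $n\geq\eps^{-2}$: the variance term contributes $\frac1n\norm{f}_\infty^2\leq \eps^2$ since $\norm{f}_\infty\leq 1$, and the bias term contributes $2\norm{f}_\infty^2\norm{\nu H_\rho^{n_0}-\pi_\rho}_{\mbox{tv}}\leq 2\norm{\nu H_\rho^{n_0}-\pi_\rho}_{\mbox{tv}}$. To make the bias at most $2\eps^2$ it suffices that $\norm{\nu H_\rho^{n_0}-\pi_\rho}_{\mbox{tv}}\leq \eps^2$, and by Theorem~\ref{thm: gen_tv_est} applied with accuracy parameter $\eps^2/2$ in place of $\eps$ (so that the guaranteed bound $2(\eps^2/2)=\eps^2$ holds) and $D=\kappa$, this is ensured precisely when
\[
 n_0 \geq 4\cdot 10^{30}\,(d r^{-1} R)^2 \log^2\!\bigl(2\kappa\, d r^{-1} R\, \eps^{-2}\bigr)\,\log^3\!\bigl(\kappa\,\eps^{-2}\bigr),
\]
which is the stated lower bound on $n_0$. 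Combining, $e(M_{n,n_0}(f,\rho))^2 \leq \eps^2 + 2\eps^2 = 3\eps^2$, hence $e(M_{n,n_0}(f,\rho))\leq \sqrt{3}\,\eps \leq 3\eps$, uniformly over $(f,\rho)\in\mathcal{G}_{r,R,\kappa}$.

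Finally, the complexity claim follows by counting: the multi run method uses $t(n,n_0)=n\cdot n_0$ Markov chain steps and each step of hit-and-run uses a bounded number of function evaluations of $(f,\rho)$, so $t_{\eps,\kappa}(n,n_0)=\mathcal{O}(n\cdot n_0)$ with $n=\lceil\eps^{-2}\rceil$ and $n_0$ the stated bound. Using $\log(\kappa\eps^{-2}) = \mathcal{O}(\log\kappa + \log\eps^{-1})$ and $\log(2\kappa dr^{-1}R\eps^{-2}) = \mathcal{O}(\log(dr^{-1}R) + \log\kappa + \log\eps^{-1})$, multiplying out the $\log^2(\cdots)\cdot\log^3(\cdots)$ factor and absorbing cross terms into the dominant ones (and keeping $r,R$ inside the $\mathcal{O}$ as stated) yields
\[
 t_{\eps,\kappa}(n,n_0) = \mathcal{O}\bigl(d^2\,(r^{-1}R)^2\,\log^2(dr^{-1}R)\,\eps^{-2}\,[\log\eps^{-1}]^5\,[\log\kappa]^5\bigr).
\]
I do not expect a serious obstacle here; the proof is essentially a bookkeeping exercise once Theorem~\ref{thm: gen_tv_est} is in hand. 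The only point requiring a little care is the replacement $\eps\rightsquigarrow\eps^2/2$ when invoking Theorem~\ref{thm: gen_tv_est}, so that the factor $\log^3(D\eps^{-1})$ there becomes $\log^3(\kappa\eps^{-2})$ and the extra two logarithmic powers (compared with Theorem~\ref{thm: Err}, which had only $\log^3$ in $\eps^{-1}$) propagate correctly into the final $[\log\eps^{-1}]^5[\log\kappa]^5$; one should double-check the constants $4\cdot 10^{30}$ and $2\,D\,dr^{-1}R\eps^{-1}$ transcribe correctly with $D=\kappa$ and $\eps\mapsto\eps^2/2$.
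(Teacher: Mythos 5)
Your proposal is correct and follows exactly the same route as the paper's (one-line) proof: bound $\frac{d\nu}{d\pi_\rho}$ by $\kappa$ via condition (\ref{en: init}), then combine Theorem~\ref{thm: err_bound_m} with Theorem~\ref{thm: gen_tv_est} in place of Theorem~\ref{thm: est_n_tv}. The only bookkeeping caveat is that the stated $n_0$ threshold corresponds to the substitution $\eps\mapsto\eps^2$ in Theorem~\ref{thm: gen_tv_est} (total variation $<2\eps^2$, squared error $\leq \eps^2+4\eps^2=5\eps^2$, error $\leq\sqrt{5}\,\eps\leq 3\eps$), whereas your choice $\eps\mapsto\eps^2/2$ yields the slightly larger requirement $n_0\geq 4\cdot 10^{30}(dr^{-1}R)^2\log^2(4\,\kappa\, dr^{-1}R\,\eps^{-2})\log^3(2\,\kappa\,\eps^{-2})$ and so does not reproduce the stated constants inside the logarithms exactly.
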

\begin{proof}
 The assertion follows by the same steps as the proof of Theorem~\ref{thm: Err}. 
 Note that we use Theorem~\ref{thm: gen_tv_est} instead of Theorem~\ref{thm: est_n_tv}.
\end{proof}

Note that in both theorems there is no hidden dependence on further parameters in the $\mathcal{O}$ notation. 
However,
the explicit constant might be very large, of the magnitude of $10^{30}$.
The theorems imply that the problem of integration (\ref{eq: sol}) is tractable with respect to $\kappa$
on the classes $\mathcal{F}_{r,R,\kappa}$ and $\mathcal{G}_{r,R,\kappa}$.\\

\emph{Example of a Gaussian function revisited}.
In the Gaussian example of Section~\ref{sec: densities} we obtained
\begin{eqnarray*}
 R/r & = & (2\,r^*(d)^{1/2})^{-1}\cdot\sqrt{{\tr(\Sigma)}/{\lambda_{\rm{min}}} } ,\\
 \kappa & = &\exp(\frac{1}{2}\;\lambda_{\rm{min}}^{-1})\; \Gamma(d/2+1)\;2^{d/2}\sqrt{\;\det(\Sigma)}.
\end{eqnarray*}
If we assume that $r^*(d)$ increases linearly in $d$ (Figure~\ref{fig: r_star_d}), 
that $\sqrt{{\tr(\Sigma)}/{\lambda_{\rm{min}}} } $ 
and $\log(\exp(\frac{1}{2}\;\lambda_{\rm{min}}^{-1})\sqrt{\;\det(\Sigma)})$
grows polynomially in the dimension, then $ t_{\eps,{\kappa}}(n,n_0) $ 
grows also polynomially in the dimension.
This implies that the integration problem with respect to the Gaussian function 
is polynomially tractable in the sense of Novak and Wo{\'z}niakowski \cite{NoWo08,NoWo10,NoWo12}.

\begin{acknowledgement}
The author gratefully acknowledges the comments of the referees and wants
to express his thanks to the local organizers
of the Tenth International Conference on Monte Carlo and Quasi-Monte Carlo
Methods in Scientific Computing for their hospitality.
The research was supported by the DFG Priority Program 1324 and the DFG Research Training Group 1523.
\end{acknowledgement}

\bibliographystyle{spmpsci}


\end{document}